\documentclass[11pt,reqno, english]{amsart}
\usepackage[a4paper]{geometry}
\usepackage{amsmath,amssymb,amscd,amsthm,amsfonts}
\usepackage{graphicx,subcaption}
\usepackage{hyperref}
\usepackage{dsfont}
\usepackage{xcolor}
\usepackage{color,soul}
\usepackage[utf8]{inputenc}
\usepackage[nobysame, alphabetic]{amsrefs}
\usepackage{tikz}
\usepackage[capitalise]{cleveref}

\newtheorem{theorem}{Theorem}[section]

\newtheorem{lemma}[theorem]{Lemma}

\newtheorem{conjecture}[theorem]{Conjecture}

\theoremstyle{definition}

\newcommand{\rr}{\mathds{R}}
\newcommand{\zz}{\mathds{Z}}

\title{Fair distribution of bundles}

\author[Sober\'on]{Pablo Sober\'on}\address{Baruch College \& The Graduate Center, City University of New York, New York, NY 10010} 
\email{psoberon@gc.cuny.edu}

\subjclass{91B32}

\keywords{fair distribution, Dold theorem, divisible goods}

\thanks{The research of P. Sober\'on was supported by NSF CAREER award no. 2237324 and a PSC-CUNY Trad B award.}

\begin{document}

\begin{abstract}
In this paper, we study the problem of splitting fairly bundles of items.  We show that given $n$ bundles with $m$ kinds of items in them, it is possible to distribute the value of each kind of item fairly among $r$ persons by breaking apart at most $(r-1)m$ bundles.  Moreover, we can guarantee that each participant will receive roughly $n/r - mr/2$ full bundles.  The proof methods are topological and use a modified form of the configuration space/test map scheme.  We obtain optimal results when $r$ is a power of two.
\end{abstract}

\maketitle

\section{Introduction}

Problems related to fair distribution of resources often showcase the applications of topological methods to combinatorics.  This may seem natural in the solution of distribution problems with a geometric setting \cites{Zivaljevic2017, RoldanPensado2022}; yet there are some purely combinatorial problems in which these techniques are useful.  There is also a vast literature on fair distribution of divisible or of indivisible goods \cite{Liu2024} given the relevance of these results to mathematical economics.  In this paper, we deal with distribution problems where the goods are divisible, but we aim to divide as few of them as possible, in addition to distributing the undivided bundles of goods as evenly as possible.  Let us first mention some emblematic results in this area.

A classic example is the \textit{necklace-splitting theorem}.  In the usual setting, $r$ thieves steal an open necklace with $m$ kinds of gemstones, and they want to distribute the gemstones of each kind fairly among themselves by cutting the necklace into as few intervals as possible.  Alon showed that such a fair distribution can always be achieved by cutting the necklace at most $(r-1)m$ times \cite{Alon1987}, which is optimal.  The first solutions when $r$ has odd prime factors were topological topological.  There are new non-topological proofs \cites{Meunier2014, FilosRatsikas2021}.  There are several solutions to this problem for $r=2$ \cites{Hubard2024, Hobby1965, Alon1986, Goldberg1985}.  Another key example is the \textit{cake-cutting theorem}.  In this problem, the $[0,1]$ interval is cut into $r$ intervals to be distributed among $r$ persons.  The main result is that, with very light conditions on the individual preferences of the persons, an envy-free distribution always exists (i.e., no person prefers the piece that someone else received) \cites{Stromquist1980, Woodall1980}.

In both the necklace-splitting theorem and the cake-cutting theorem, we have an order imposed on the objects to be distributed.  Indeed, if we wanted to distribute the gemstones in the necklace-splitting theorem without the order condition, we can simply give each thief their corresponding number of each kind of gemstone.  There are many variations and generalizations of both the necklace-splitting theorem, the cake-cutting theorem, and the tools used in their solutions at the forefront of current research (see, e.g., \cites{procaccia2015cake, Blagojevic2018, McGinnis2024a, FilosRatsikas2021, Avvakumov2021, Asada:2018ix} and the references therein).  

In this paper, we study another combinatorial distribution problem in which topological tools provide a solution.  In this problem we do not have an induced order on the objects.  We aim to split $n$ objects with $m$ different valuations assigned to them (or $n$ bundles with $m$ different kinds of products in them) by cutting/breaking as few as possible and by giving each participant as many full bundles as possible.  Following tradition in the area, we give a lighthearted interpretation: children distributing cookies.  Our main results is as follows.

\begin{theorem}\label{thm:main-theorem}
    Let $r, n, m$ be positive integers.  A group of $r$ children has $n$ cookies.  Each cookie has some amount of $m$ possible kinds of frosting.  Then, it is possible to cut at most $(r-1)m$ cookies and distribute all the cookies so that each child has the same amount of each kind of frosting and at least $\displaystyle \left\lfloor \frac{n-\left\lceil\frac{m(r-1)}{2}\right\rceil}r\right\rfloor-\left\lfloor\frac{m(r-1)}{2}\right\rfloor$ full cookies.
\end{theorem}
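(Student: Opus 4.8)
The plan is to recast the problem as a fair-partition statement for the $m$ ``frosting measures'' carried by the $n$ cookies, and then to run a configuration space/test map argument. First I would model a distribution as a tuple $x=(x_1,\dots,x_n)$ with $x_i\in\Delta^{r-1}$, where $x_{i,c}$ is the fraction of cookie $i$ handed to child $c$; cookie $i$ is \emph{full} when $x_i$ is a vertex of its simplex and \emph{cut} otherwise. Writing $f_{ij}$ for the amount of frosting $j$ on cookie $i$, the requirement that every child receive the same amount of each frosting becomes the $m(r-1)$ linear conditions $\sum_i x_{i,c}f_{ij}=\frac1r\sum_i f_{ij}$. There are two things to extract from such an $x$: that at most $(r-1)m$ cookies are cut, and that the full cookies are spread so that each child gets the advertised number.

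For the existence of a fair distribution with few cut cookies I would use the topological scheme advertised in the abstract. Take as configuration space a highly connected free $G$-space --- the join $(\mathbb{Z}/r)^{*N}$ for the cyclic group $G=\mathbb{Z}/r$, or, when $r=2^s$, an $s$-fold construction for $G=(\mathbb{Z}/2)^s$, which is what yields the optimal constants --- and build a $G$-equivariant test map into the representation carrying the $m(r-1)$ frosting discrepancies. The equivariance encodes the symmetry among the children, and a zero of the test map is exactly a fair distribution. Dold's theorem (or, for $r$ a power of two, iterated Borsuk--Ulam) then forces a zero provided the target dimension does not exceed the connectivity of the source, and a dimension/degree count ties the number of join factors that are genuinely ``split'' to the target dimension $m(r-1)$, which is precisely the bound on the number of cut cookies.

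The main obstacle is that frosting-balance alone says nothing about how the full cookies are apportioned: a fair distribution could dump almost all whole cookies on one child. To control this I would enlarge the test map with additional equivariant coordinates that (approximately) equalize the total mass $\sum_i x_{i,c}$ handed to each child, so that after the partition each child carries total mass near $n/r$; the cost of these extra coordinates is what degrades the guarantee from the naive $n/r$ down to roughly $n/r-mr/2$. The final step is a counting argument on the resulting distribution: of the at most $(r-1)m$ cut cookies, those whose fractional pieces can be recombined into whole cookies for a given child are separated from those that are irrecoverably lost, and balancing this recombination across the $r$ children splits the deficit as $\lceil m(r-1)/2\rceil$ of shared mass (the term divided by $r$) plus $\lfloor m(r-1)/2\rfloor$ lost per child. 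Assembling these estimates and taking integer parts yields the stated lower bound $\bigl\lfloor (n-\lceil m(r-1)/2\rceil)/r\bigr\rfloor-\lfloor m(r-1)/2\rfloor$. I expect the delicate point throughout to be exactly this coupling of the topological existence with the combinatorial accounting for whole cookies, since the former controls frosting while the latter controls counts, and the two must be made to hold simultaneously.
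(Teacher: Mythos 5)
Your proposal has a genuine gap at its central step: the mechanism you propose for controlling the number of full cookies --- enlarging the test map with extra equivariant coordinates that equalize the total mass $\sum_i x_{i,c}$ given to each child --- is exactly the ``naive approach'' that the paper describes in its remarks section and shows to be insufficient. Adding those coordinates raises the target representation from dimension $m(r-1)$ to $(m+1)(r-1)$, so Dold's theorem only forces a zero if the configuration space has correspondingly higher connectivity, which means allowing up to $(r-1)(m+1)$ cut cookies rather than the $(r-1)m$ claimed in the theorem. Moreover, the guarantee this yields is only about $\lceil n/r\rceil-(r-1)(m+1)$ full cookies, which is weaker than the stated bound $\bigl\lfloor (n-\lceil m(r-1)/2\rceil)/r\bigr\rfloor-\lfloor m(r-1)/2\rfloor$. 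Your final paragraph, which tries to recover the stated constants by ``recombining'' fractional pieces and ``balancing the deficit,'' supplies no actual mechanism: once the test map has a zero, the distribution is what it is, and nothing in your setup prevents all the cut cookies from shortchanging the same child. You have reverse-engineered the shape of the formula rather than derived it.

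The idea you are missing is that the paper does \emph{not} add test-map coordinates at all; the full-cookie guarantee is encoded in the geometry of the configuration space itself. The cookies are placed on $[0,1]$ and distributed in a fixed cyclic protocol (cookie after a cut goes to the next child modulo $r$), so that the only harmful events are ``bad cuts'' that skip children. The configuration space is then taken to be a join of copies of $G_{*2}$, a $G$-invariant cycle inside $G*G=K_{p,p}$ (a construction of Vu\v{c}i\'c--\v{Z}ivaljevi\'c), rather than the full join $G^{*(s+1)}$: within each $G_{*2}$ factor the two labels can differ only by $0$ or $1$, so the cut inside that factor is never bad, and hence at most $\lfloor (r-1)m/2\rfloor$ of the $(r-1)m$ cuts can be bad. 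This keeps the target at dimension $m(r-1)$ (so only $(r-1)m$ cuts are needed for the Dold argument to close), while a purely combinatorial accounting of bad cuts, done separately from the topology, converts the bound on bad cuts into the stated full-cookie count. Two further points you gloss over: the cyclic action of $\zz_r$ on the target sphere is free only for $r$ prime, so the equivariant argument is run for primes and extended to all $r$ by a multiplicative lemma (if the theorem holds for $a$ and $b$ it holds for $ab$); and the optimal powers-of-two result comes from iterating the $r=2$ case, which is proved with Gale's lemma and the ham sandwich theorem, not with a $(\zz_2)^s$-equivariant scheme.
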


Recently, Joji\'c, Panina, and \v{Z}ivaljevic proved several generalizations of Alon's necklace-splitting theorem \cite{Jojic2021}.  One of their results, \textit{the equi-cardinal necklace splitting theorem} shows that we can cut necklaces with $m$ measures as in the necklace-splitting theorem with the additional constraint that each thief receives roughly the same number of intervals of the cut.  It seems that neither \cref{thm:main-theorem} nor Joji\'c et al.'s result imply the other, and the use of topological tools in the proofs are different.

The number of cuts  in \cref{thm:main-theorem} is optimal, as we could have the frosting of kind $j$ distributed evenly among $r-1$ cookies.  Each of those cookies has to be cut, and we can do this for each kind of frosting without repeating any cookies.

The number of cookies that we can guarantee that each child receives can be at most $\displaystyle \left\lfloor \frac{n-m(r-1)}r\right\rfloor = \left\lfloor \frac{n+m}{r}\right\rfloor - m$, as that would be an equitable distribution of all full cookies.  We conjecture that this should be possible.

\begin{conjecture}
 \label{conj:main-conj}
    Let $r, n, m$ be positive integers.  A group of $r$ children has $n$ cookies.  Each cookie has some amount of $m$ possible kinds of frosting.  Then, it is possible to cut at most $(r-1)m$ cookies and distribute them so that each child has the same amount of each kind of frosting and at least $\displaystyle \left\lfloor \frac{n-m(r-1)}r\right\rfloor$ full cookies.
\end{conjecture}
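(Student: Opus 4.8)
The plan is to retain the configuration space/test map framework behind \cref{thm:main-theorem} while enlarging the test map so that the whole cookies, and not only the frosting, are forced to be equidistributed. Model the $n$ cookies as the unit intervals $[i-1,i]\subset[0,n]$, put the $m$ frosting measures $\mu_1,\dots,\mu_m$ on $[0,n]$, and regard a distribution as an $r$-colouring of $[0,n]$; a cut cookie is an interval whose interior contains a colour change, and the whole cookies of a child are the monochromatic intervals it receives. The scheme already yields an exact fair colouring with at most $(r-1)m$ internal colour changes, so at least $n-(r-1)m$ cookies are whole; the missing ingredient is that the vector $w=(w_1,\dots,w_r)$ of whole-cookie counts should satisfy $\min_k w_k\ge\lfloor(n-(r-1)m)/r\rfloor$. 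To drive the scheme toward this, I would add a cookie-counting atomic measure $\nu$, with a unit mass at the centre of each cookie, and demand that the enlarged colouring equidistribute $\nu$ as well: since finitely many colour changes generically miss the centres, equidistribution of $\nu$ says each child owns about $n/r$ centres, and subtracting the at most $(r-1)m$ cut cookies recovers the desired floor provided the cut cookies are themselves apportioned evenly.

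First I would treat $r=p$ prime. One builds a configuration space carrying a free $\zz/p$-action that records, for each of the $(p-1)m$ admissible cuts, its position and the cyclic assignment of the resulting pieces, together with a $\zz/p$-equivariant test map into $W:=(m+1)W_p$, where $W_p$ is the reduced regular representation: the first $m$ summands measure the frosting discrepancies and the last summand measures the discrepancy of $\nu$. A zero is forced once the configuration space is shown to be $\bigl((m+1)(p-1)-1\bigr)$-connected, which is the content of a Dold-type nonexistence theorem for $\zz/p$-equivariant maps to the sphere $S(W)$. This is exactly the step that succeeds for $p=2$, and hence, by iterating the halving, for every $r=2^k$: the sign representation of $\zz/2$ is one-dimensional and real, so the extra constraint on $\nu$ can be met by a colour change placed at a cookie boundary, which costs nothing against the cut budget.

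The crux is to reproduce this for odd primes, and here I expect the genuine obstacle. The whole-cookie count $w_k$ is an integer-valued, discontinuous functional of the configuration, so it cannot enter a continuous equivariant test map directly; one must balance the continuous proxy $\nu$ and then argue that at a solution the cuts avoid the centres and split the cut cookies evenly, so that proxy-balance upgrades to $w$-balance. For $\zz/p$ with $p$ odd the representation $W_p$ has no real one-dimensional summand, and the mechanism that placed the $\nu$-balancing change ``for free'' at a boundary has no evident analogue; balancing $\nu$ appears to consume genuine cuts, which is precisely the origin of the loss term $\lfloor m(r-1)/2\rfloor$ in \cref{thm:main-theorem}. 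I would attack this by perturbing $\nu$ to a nearby absolutely continuous measure, solving the fully balanced problem, and then taking a limit in which the $\nu$-balancing changes are pushed onto integer points; the difficulty is to control the limit so that fairness of the $\mu_j$, the bound $(r-1)m$ on internal cuts, and the even apportionment of the cut cookies all survive at once.

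Finally, once the prime case holds, the general case should follow by the multiplicative induction used for the necklace theorem. For $r=r_1r_2$ one first splits into $r_1$ fair groups with $(r_1-1)m$ cuts and whole cookies balanced across the groups, then treats each group as a single concatenated necklace and splits it into $r_2$ fair parts with $(r_2-1)m$ cuts apiece, for a total of $(r_1r_2-1)m=(r-1)m$ cuts. What remains is a bookkeeping check, with the floors handled carefully, that the two rounds of $\nu$-balancing compose to the bound $\lfloor(n-(r-1)m)/r\rfloor$; this should be routine once one verifies that the first round can distribute whole cookies among the $r_1$ groups in proportions that the second round is able to equalize.
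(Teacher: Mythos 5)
First, note that the statement you are trying to prove is \cref{conj:main-conj}, which the paper leaves \emph{open}: it is confirmed only for $r=2$ (\cref{thm:two-optimal}, via Gale's lemma and the ham sandwich theorem) and for $r=2^t$ under the divisibility condition $r \mid n+m$ (\cref{thm:powers-of-two}); the general \cref{thm:main-theorem} deliberately settles for the weaker bound $\left\lfloor \frac{n-\lceil m(r-1)/2\rceil}{r}\right\rfloor - \left\lfloor \frac{m(r-1)}{2}\right\rfloor$. Your proposal is, in essence, the ``naive approach'' that the paper itself describes and rejects in \cref{sec:remarks}: adjoin an $(m+1)$-st cookie-counting measure and equidistribute $m+1$ measures via Dold's theorem. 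The numbers in your prime case do not close. The configuration space $\zz_p^{*(s+1)}$ with $s=(p-1)m$ cuts is a join of $s+1$ copies of a discrete set, hence only $\bigl((p-1)m-1\bigr)$-connected, while your target sphere $S\bigl((W_p)^{\oplus(m+1)}\bigr)$ has dimension $(m+1)(p-1)-1$; Dold's theorem therefore requires $s \ge (m+1)(p-1)$, i.e., $(r-1)(m+1)$ cuts, exceeding the budget $(r-1)m$ by $r-1$. Your claim that for $p=2$ the extra $\nu$-balancing change can be ``placed at a cookie boundary for free'' has no mechanism behind it: the cut positions are produced by the zero of the test map, not chosen by you, and one-dimensionality of the sign representation does not let you steer a coordinate of the zero onto an integer point. (The paper's optimal $r=2$ result does not use an extra measure at all; it uses Gale's lemma on $S^m$ so that every ham-sandwich hyperplane automatically leaves $n'$ whole cookies on each side.)

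Beyond the cut-budget arithmetic there are two further unproven steps that you flag but do not resolve, and these are precisely where the conjecture is hard. First, equidistribution of $\nu$ (or of cookie dough) does not imply equidistribution of \emph{whole-cookie counts}: each child gets mass $n/r$, but the cut cookies absorb the discrepancy, and nothing in the equivariant scheme forces the cut cookies to be apportioned evenly among the children — in the worst case one child loses all $(r-1)(m+1)$ of them, giving only $\lceil n/r\rceil - (r-1)(m+1)$, well below $\lfloor (n-(r-1)m)/r\rfloor$. The paper's actual mechanism for controlling whole-cookie counts is entirely different: a cyclic assignment of cookies together with the reduced configuration space $G_{*2}$ (the Vu\v{c}i\'c--\v{Z}ivaljevi\'c-style cycle inside $G*G$), which constrains consecutive labels so that most cuts are not ``bad cuts''; your proposal contains no analogue of this. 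Second, your multiplicative induction is not routine bookkeeping: the floors do not compose, and this is exactly where the paper's own subdivision argument loses ground — even for $r=2^t$, iterating the optimal $r=2$ case proves the conjecture only when $r \mid n+m$ and otherwise costs an additive $(r-2)$. Your proposed perturbation-and-limit argument for odd primes is stated as a hope, with the admission that controlling the limit is the difficulty; as written, the proposal is a research plan for an open problem, not a proof, and its concrete quantitative steps (the Dold dimension count and the composition of floors) fail as stated.
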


In \cref{sec:two-children}, we confirm \cref{conj:main-conj} for the case $r=2$, and we give an algorithmic solution for $m=r=2$.  In \cref{sec:remarks}, we explain how the case $r=2$ implies the case $r=2^k$ when $n+m$ is a multiple of $r$.

\begin{theorem}\label{thm:two-optimal}
    Let $n,m$ be positive integers.  Two children have $n$ cookies.  Each cookie has some amount of $m$ possible kinds of frosting.  Then, it is possible to cut at most $m$ cookies and distribute them so that each child has the same amount of each kind of frosting and at least $\left\lfloor \frac{n-m}{2}\right\rfloor$ full cookies.
\end{theorem}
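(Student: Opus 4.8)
The plan is to prove Theorem~\ref{thm:two-optimal} by the configuration space/test map scheme specialized to $r=2$, where the relevant topology is governed by a sphere with the antipodal $\zz/2$-action and the conclusion reduces to the Borsuk--Ulam theorem. I first encode each cookie $i$ as a point whose first coordinate records the (signed) amount by which we split that cookie between the two children and whose remaining coordinates will carry the $m$ frosting values. Concretely, I would parametrize a distribution by a vector $x=(x_1,\dots,x_n)\in S^{n-1}$, interpreting $\operatorname{sign}(x_i)$ as the child to whom cookie $i$ is (mostly) assigned and $x_i^2$ as the fraction of that cookie given to that child; a cookie is cut precisely when $x_i=0$ is \emph{not} forced, so the cut cookies correspond to a controlled subset of coordinates. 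The antipodal map $x\mapsto -x$ swaps the two children, which is exactly the symmetry we want the test map to respect.

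\medskip

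Next I would build the test map. For each kind $j\in\{1,\dots,m\}$ of frosting, let $\mu_j$ be the total amount of that frosting, and define $f_j(x)$ to be the difference between the amount of frosting $j$ received by child~$1$ and by child~$2$ under the distribution encoded by $x$. Writing $f=(f_1,\dots,f_m)\colon S^{n-1}\to\R^m$, each $f_j$ is odd (swapping children negates the difference), so $f$ is a $\zz/2$-equivariant map from $S^{n-1}$ with the antipodal action to $\R^m$ with the antipodal action. A zero of $f$ is exactly a fair split of all $m$ frostings. The Borsuk--Ulam theorem guarantees such a zero as soon as $n-1\ge m$, i.e.\ $n\ge m+1$; but this bare statement controls neither the number of cuts nor the number of full cookies, so the real work is in refining the configuration space.

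\medskip

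The key idea, and the step I expect to be the main obstacle, is to enrich the target so that the equivariant map also records \emph{how many cookies are cut} and \emph{how unevenly the full cookies are split}, and then to argue that a carefully chosen point of the test space must simultaneously zero out the frosting differences while keeping the combinatorial defects within the claimed bounds. I would add a coordinate (or a small collection of coordinates) whose vanishing encodes that the number of cut cookies does not exceed $m$ and that the imbalance in full cookies is at most the discrepancy permitted by $\lfloor (n-m)/2\rfloor$. The natural mechanism is to track, among the uncut cookies, the signed count of cookies assigned to each child and to force this near-balance as an additional odd function; combining it with $f$ gives an equivariant map to a sphere of one higher dimension, and I would apply Borsuk--Ulam (or the $\zz/2$ index/ideal-valued cohomology comparison) to the correct dimension count to extract a point where all constraints hold at once. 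Verifying that a zero of this enriched map genuinely yields \emph{at most} $m$ cuts---rather than merely a bounded expected number---requires a dimension-counting or general-position argument showing that at most $m$ of the coordinates $x_i$ can be nonintegral at the solution, which is where the delicate combinatorial bookkeeping lives.

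\medskip

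Finally I would translate the topological conclusion back to cookies: the zero set coordinates with $x_i=\pm 1$ give full cookies allotted to the corresponding child, the (at most $m$) coordinates with $0<|x_i|<1$ give the cut cookies, and the frosting balance is exactly $f=0$. A short counting argument then shows that after distributing the $n-(\text{number of cut cookies})\ge n-m$ full cookies as evenly as the sign pattern allows, each child ends with at least $\lfloor (n-m)/2\rfloor$ full cookies, which is the claimed bound. Since Borsuk--Ulam is tight in this dimension, the estimate cannot be improved by this method, matching the optimality noted in the discussion following Theorem~\ref{thm:main-theorem}.
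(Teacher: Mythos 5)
Your proposal has a genuine gap at precisely the point you flag as ``the main obstacle,'' and the mechanism you sketch to close it cannot work as stated. The quantities you want to adjoin to the test map --- the number of cut cookies, and the signed count of full cookies among the uncut ones --- are not continuous functions of $x\in S^{n-1}$: they jump when a coordinate $x_i$ crosses $0$ or $\pm 1$. So they cannot be added as extra coordinates of a $\zz/2$-equivariant map to which Borsuk--Ulam applies. The fallback you offer, a ``dimension-counting or general-position argument showing that at most $m$ of the coordinates $x_i$ can be nonintegral at the solution,'' is unjustified and in fact false for generic zeros: the zero set of $f=(f_1,\dots,f_m)$ on $S^{n-1}$ is typically a manifold of dimension $n-1-m$, and its generic points have \emph{all} coordinates strictly fractional; nothing forces the existence of a zero with $n-m$ coordinates at $\pm 1$, which is the whole content of the theorem. (There is also a secondary flaw in the parametrization itself: with the constraint $\sum_i x_i^2=1$ the coordinates cannot be read as independent per-cookie fractions --- e.g.\ $x_i=\pm1$ forces every other cookie's ``fraction'' to be $0$ --- so even the translation back to a distribution is not well defined.)

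The paper closes exactly this gap by moving the combinatorial control out of the test map and into the configuration space, via Gale's lemma (\cref{lem:Gale}): after possibly adding one empty cookie so the count is $2n'+m$, one places $2n'+m$ points on $S^m$ so that every hyperplane through the origin leaves at least $n'$ points in each open half-space (and, by the moment-curve construction, contains at most $m$ of the points). Each cookie is paired with a point, each frosting kind becomes a discrete measure on $\rr^{m+1}$, and a point mass at the origin is added; the ham sandwich theorem then produces a bisecting hyperplane that must pass through the origin. The $\le m$ cookies whose points lie on that hyperplane are the only ones that may be cut, and Gale's property hands each child at least $n'=\lfloor\frac{n-m}{2}\rfloor$ full cookies outright --- no continuity issues, because ``how many cookies are cut'' and ``how many full cookies each child gets'' are properties of the fixed point configuration, not of the variable in the topological argument. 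If you want to salvage your approach, this is the idea you are missing: the discrete bookkeeping must be encoded in the geometry of a fixed configuration (or, as in the paper's main proof of \cref{thm:main-theorem}, in the cell structure of the configuration space $K$), never in the target of the equivariant map.
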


\begin{theorem}\label{thm:powers-of-two}
    Let $n,m,t$ be positive integers and let $r=2^t$.  Suppose $n+m$ is a mulitple of $r$.  A group of $r$ children has $n$ cookies.  Each cookie has some amount of $m$ possible kinds of frosting.  Then, it is possible to cut at most $(r-1)m$ cookies and distribute them so that each child has the same amount of each kind of frosting and at least $\frac{n-(r-1)m}{r}$ full cookies.  If $n+m$ is not a multiple of $r$, we can guarantee that each child receives at least $\left\lfloor \frac{n-(r-1)m}{r}\right\rfloor - (r-2)$ full cookies.
\end{theorem}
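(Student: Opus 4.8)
The plan is to prove \cref{thm:powers-of-two} by induction on $t$, using \cref{thm:two-optimal} (the case $r=2$) as the only combinatorial engine and placing the $r=2^t$ children at the leaves of a balanced binary tree of depth $t$. At the root I would invoke \cref{thm:two-optimal} to split all $n$ cookies between two groups of $2^{t-1}$ children each, cutting at most $m$ cookies and giving the two groups exactly the same amount of each frosting, and then recurse on each group. Since the tree has $r-1=2^t-1$ internal nodes and each application of \cref{thm:two-optimal} cuts at most $m$ cookies, the total number of cuts is at most $(r-1)m$, which is the first of the two claimed bounds.

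For the count of full cookies I would carry the threshold $N_d=\frac{n-(2^d-1)m}{2^d}$ and maintain the invariant that every node at depth $d$ retains at least $N_d$ original, uncut cookies. Here the divisibility hypothesis is essential: $n+m\equiv 0 \pmod r$ gives $N_d+m=\frac{n+m}{2^d}\equiv 0 \pmod{2^{t-d}}$, so each $N_d$ is a nonnegative integer and so is $N_{d+1}=\frac{N_d-m}{2}$. A node at depth $d$ holding at least $N_d$ full cookies is split by \cref{thm:two-optimal} into two depth-$(d+1)$ groups, each guaranteed at least $\lfloor\frac{N_d-m}{2}\rfloor=N_{d+1}$ full cookies, so by monotonicity the invariant propagates down the tree. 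At the leaves $d=t$ this yields $N_t=\frac{n-(r-1)m}{r}$ full cookies per child, exactly the asserted bound; when $n+m$ is not divisible by $r$ the thresholds are replaced by their floors and the halving becomes only approximate.

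The main obstacle is that cutting cookies at the internal nodes produces fractional pieces, and these pieces descend the tree as ordinary "cookies" of the sub-instances. When I apply \cref{thm:two-optimal} to a group, its guarantee counts \emph{any} uncut object as full, so a piece inherited from an ancestor cut could be kept whole while a genuine original cookie is cut in its place; such a piece must not be credited as a full cookie in the final tally. The crux of the argument is therefore to control these pieces so that original cookies are preferentially kept whole and the invariant above is really an invariant about uncut \emph{originals}. In the divisible case the thresholds telescope exactly and no child is forced to accept a piece in lieu of an original, which is why the bound is clean; when $n+m$ is not divisible by $r$ this control is only approximate, and accounting crudely for the pieces that may survive whole is what forces the additive correction $-(r-2)$. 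I expect this piece-bookkeeping to be the delicate part of the proof, and it is precisely the slack that \cref{conj:main-conj} predicts should be removable.
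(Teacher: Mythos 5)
Your overall architecture -- induction on $t$, applying \cref{thm:two-optimal} at each of the $r-1$ internal nodes of a binary tree, and telescoping the thresholds $N_d=\frac{n-(2^d-1)m}{2^d}$ -- is exactly the paper's, and your arithmetic under the divisibility hypothesis is correct. But the step you defer as ``the delicate part'' (making the invariant count uncut \emph{originals} rather than uncut objects) is not bookkeeping to be filled in later; it is the one idea the proof actually needs, and your proposal does not supply it. Nothing in your argument prevents \cref{thm:two-optimal}, applied to a sub-instance containing loose pieces, from keeping pieces whole while cutting genuine originals, and your assertion that divisibility ensures ``no child is forced to accept a piece in lieu of an original'' has no justification: divisibility controls the arithmetic of the thresholds, not which objects the topological argument chooses to cut. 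As it stands, the invariant is simply not established.

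The paper closes this gap with a one-line device, inherited from the subdivision lemma in the proof of \cref{thm:main-theorem}: before recursing, each group \emph{appends all the pieces it received to one of its full cookies} and treats the merged object as a single cookie of the sub-instance. Then the sub-instance has exactly as many cookies as the group has full cookies, every one of them contains at least one whole original, and therefore any sub-instance cookie that survives uncut yields at least one uncut original -- the invariant propagates with no case analysis, and cutting the merged cookie still costs one cut, so the bound $(r-1)m$ is unaffected. Your treatment of the non-divisible case is also not a proof and misidentifies the source of the loss: the paper does not attempt ``approximate halving'' there, but instead pads the instance with at most $r-1$ empty cookies to reach divisibility, applies the divisible case, and subtracts the at most $r-1$ empty cookies a child might receive, giving $\left\lceil \frac{n-(r-1)m}{r}\right\rceil - (r-1) = \left\lfloor \frac{n-(r-1)m}{r}\right\rfloor - (r-2)$ since $\frac{n-(r-1)m}{r}$ is not an integer in that case.
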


Our proof of \cref{thm:main-theorem} has a few additional benefits.  If a cookie is to be distributed among $k$ children, we have to use at least $k-1$ cuts for it.  In other words, once we decide to cut a cookie we still cannot distribute its contents in any way unless we spend more cuts.  Additionally, if we want to have some geometry in the problem and think of the cookies as objects in $\rr^3$ with measures on them (or even as objects in $\rr^d$), we do not need elaborate cuts.  The distribution can be made by using cuts by hyperplanes.

The main idea of the proof is to use the test map/configuration space scheme, which is a standard approach in topological combinatorics.  We parametrize the space of partitions and construct a test map that checks whether a partition meets the conditions we want.  The construction of the test map involves some nuance to force an equitable distribution of the full cookies.  Readers familiar with the test map scheme may initially be drawn to the following approach: use a similar configuration space as with the necklace-splitting theorem and adjust the test map to check that the number of full cookies is also fairly distributed.  Although this approach gives a simple proof that the children can expect to receive $\frac{n}{r}-O(mr)$ full cookies, it requires $(r-1)(m+1)$ cuts and guarantees a smaller number of full cookies for each child than \cref{thm:main-theorem}.  In \cref{sec:remarks} we describe this approach.

Our construction of the configuration space also involves a nonstandard technique, using part of a construction that Vu\v{c}i\'c and \v{Z}ivaljevi\'c used to prove lower bounds for Sierksma's conjecture \cite{Vucic1993} around Tverberg's theorem (see, e.g., \cite{Barany2018}).  Without the Vu\v{c}i\'c--\v{Z}ivaljevi\'c construction, our method would guarantee that each child receives at least $\lceil\frac{n}{r}\rceil-m(r-1)$ full cookies.  \cref{thm:main-theorem} is half-way between this bound and the one in \cref{conj:main-conj}.

Some problems related to \cref{thm:main-theorem} have appeared in mathematical contests \cite{BKC05}.  In those problems, we want to satisfy the preferences of a single person while taking as few cookies as possible (see \cite{RoldanPensado2022}*{Ex. 2.4.4}, but instead require that a person wants to receive at least a $(1/r)$-fraction of each kind of frosting).  Solving such problems is much more straightforward; it suffices to place the cookies in a line, apply the necklace splitting theorem, and look for the person who received parts from the smallest number of cookies.  Once we want to satisfy more people, the modifications to the test map scheme become relevant.

In \cref{sec:notation} we present some preliminary tools and establish the notation for our proofs.  Then, in \cref{sec:two-children} we prove \cref{thm:two-optimal}.  We describe the intuition behind our proof in \cref{sec:intuition} before proving \cref{thm:main-theorem} in \cref{sec:main-proof}.  Finally, in \cref{sec:remarks} we prove \cref{thm:powers-of-two} and give additional remarks on the problems and methods from the paper.

\section{Notation and definitions}\label{sec:notation}

To prove \cref{thm:main-theorem} we need to work with a special family of sets of measures.  We say that $\mu$ is a \textit{labeled partition} if, for some positive integer $n$, it is an ordered $n$-tuple of measures on $\rr^1$, namely $\mu=(\mu_1,\dots,\mu_n)$. 

We say that a topological space $X$ is $n$-connected if its reduced homology groups $\Tilde{H}_k(X)$ with rational coefficients vanish for all $k \le n$.  We also say that a disconnected but non-empty set is $(-1)$-connected.  A path-connected set is $0$-connected.  A direct consequence of the K\"unneth formula for joins is that if $X$ and $Y$ are topological spaces such that $X$ is homologically $n$-connected and $Y$ is homologically $m$-connected, then their join $X * Y$ is homologically at least $(n+m+2)$-connected.  In particular, the $(s+1)$-fold join of a discrete set is at least $(s-1)$-connected, and the $(s+1)$-fold join of a connected set is at least $2s$-connected.

The main topological tool we need is the following generalization of the Borsuk--Ulam theorem by Dold.  Given two spaces $X$, $Y$ with an action of some group $G$, a function $f:X\to_G Y$ is $G$-equivariant if $f(gx) =gf(x)$ for all $x \in X$ and $g\in G$.  

\begin{theorem}[Dold 1983 \cite{Dold:1983wr}]\label{thm:dold}
Let $G$ be a finite group, $|G|>1$, $X$ be an $n$-connected space with a free action of $G$, and $Y$ be a paracompact topological pace of dimension at most $n$ with a free action of $G$.  Then, there exists no $G$-equivariant continuous map $f:X \to_G Y$. 
\end{theorem}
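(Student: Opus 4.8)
The plan is to argue by contradiction and to measure the two spaces against the classifying space $BG$ through an equivariant cohomological index. First I would reduce to the case of a cyclic group of prime order: since $G$ is finite with $|G|>1$, it contains a subgroup $H\cong \zz/p$ for some prime $p$, any free $G$-action restricts to a free $H$-action, and a $G$-equivariant map is in particular $H$-equivariant, so it suffices to rule out an $H$-equivariant map. From now on I take $G=\zz/p$. Supposing a $G$-equivariant map $f\colon X\to Y$ exists, I form the Borel constructions $X_G=EG\times_G X$ and $Y_G=EG\times_G Y$, which fiber over $BG$ with fibers $X$ and $Y$; the map $f$ induces a fiberwise map $f_G\colon X_G\to Y_G$ over $BG$. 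Because both actions are free, $X_G\simeq X/G$ and $Y_G\simeq Y/G$. Working with $\mathbb{F}_p$ coefficients, I attach to each space its Fadell--Husseini index $\operatorname{Ind}_G(Z)=\ker\bigl(H^*(BG)\to H^*(Z_G)\bigr)$, and the factorization of the projection $X_G\to BG$ through $f_G$ gives the monotonicity $\operatorname{Ind}_G(Y)\subseteq \operatorname{Ind}_G(X)$.

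The next step is to pin down the two indices in the relevant range. For $X$, I would run the Serre spectral sequence of the fibration $X\to X_G\to BG$. Since $X$ is homologically $n$-connected, the fiber rows in degrees $1,\dots,n$ vanish, so no differential can enter or leave the base row $H^*(BG)$ in total degrees $\le n+1$; consequently the edge homomorphism $H^k(BG)\to H^k(X_G)$ is injective for $k\le n+1$, i.e. $\operatorname{Ind}_G(X)$ contains no nonzero class of degree $\le n+1$. For $Y$, I would use the dimension hypothesis: $Y_G\simeq Y/G$ has covering dimension $\le \dim Y\le n$, so $H^{k}(Y_G;\mathbb{F}_p)=0$ for every $k>n$. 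Since $H^*(B\zz/p;\mathbb{F}_p)$ is nonzero in every degree, I may pick a nonzero class $\theta\in H^{n+1}(BG;\mathbb{F}_p)$; its image in $H^{n+1}(Y_G)=0$ vanishes, so $\theta\in\operatorname{Ind}_G(Y)$.

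Combining these yields the contradiction: by monotonicity $\theta\in\operatorname{Ind}_G(Y)\subseteq\operatorname{Ind}_G(X)$, yet $\theta\ne 0$ has degree $n+1$, contradicting that $\operatorname{Ind}_G(X)$ is trivial through degree $n+1$. An equivalent and more geometric route avoids the spectral sequence: let $E_NG$ denote the $(N{+}1)$-fold join of $G$, a free $(N-1)$-connected $G$-space of dimension $N$; obstruction theory shows that an $n$-connected free $G$-space admits a $G$-map from $E_{n+1}G$, while a paracompact free $G$-space of dimension $\le n$ admits a $G$-map to $E_nG$. Composing with $f$ would produce a $G$-map $E_{n+1}G\to E_nG$, which the same index comparison forbids.

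The main obstacle is the pair of index computations and matching them to the stated hypotheses. The reduction forces me to work with $\mathbb{F}_p$ rather than rational coefficients, so the connectivity of $X$ must be read in $\mathbb{F}_p$-homology; for the spaces used later (joins of discrete sets, which are wedges of spheres) this is automatic, but in general this is precisely where the coefficient field matters. The other delicate point is the upper bound for $Y$: I am given only that $Y$ is paracompact of covering dimension $\le n$, so the vanishing $H^{>n}(Y/G)=0$ (equivalently, the obstruction-theoretic construction of a $G$-map $Y\to E_nG$) must be justified through \v{C}ech cohomology and the $(n-1)$-connectedness of the target rather than by assuming a CW structure. Everything else---the reduction to $\zz/p$, the monotonicity of the index, and the nonvanishing of $H^*(B\zz/p;\mathbb{F}_p)$ in all degrees---is routine.
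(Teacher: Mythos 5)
The paper never proves this statement---it is Dold's theorem, imported as a black box from \cite{Dold:1983wr}---so your proposal can only be measured against Dold's original argument rather than against anything internal to the paper. Your proof is correct and is the standard modern alternative: reduce to a subgroup $H\cong\zz/p$ by Cauchy's theorem (freeness and equivariance restrict to subgroups), then compare Fadell--Husseini indices over $B\zz/p$ with $\mathbb{F}_p$-coefficients. The two computations are right: the Serre spectral sequence of $X\to X_G\to BG$ gives injectivity of $H^k(BG;\mathbb{F}_p)\to H^k(X_G;\mathbb{F}_p)$ for $k\le n+1$ (no differential can reach the bottom row in that range), while paracompactness and $\dim Y\le n$ force $H^{n+1}(Y_G;\mathbb{F}_p)\cong H^{n+1}(Y/G;\mathbb{F}_p)=0$ in \v{C}ech cohomology, and $H^{n+1}(B\zz/p;\mathbb{F}_p)\neq 0$ closes the contradiction via monotonicity. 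Dold's own proof is the more elementary ``geometric route'' you sketch at the end: build a $G$-map $Y\to G^{*(n+1)}$ from paracompactness and the dimension bound, build a $G$-map $G^{*(n+2)}\to X$ from connectivity by equivariant obstruction theory, and rule out a $G$-map $G^{*(n+2)}\to G^{*(n+1)}$ by a Lefschetz-number/degree argument, with no spectral sequences or equivariant cohomology. Your route buys uniformity (one invariant handles both hypotheses); Dold's buys elementarity, which is presumably why the paper cites that source.

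The caveat you flag about coefficients is genuine and worth stressing, because it interacts with the paper's conventions: your argument requires the connectivity of $X$ to hold in $\mathbb{F}_p$-homology (homotopy connectivity or integral homological connectivity both suffice, via Hurewicz or universal coefficients), whereas the paper defines $n$-connected through \emph{rational} homology. Rational connectivity cannot feed an index argument at all---$H^{>0}(BG;\mathbb{Q})=0$ for finite $G$, so the rational index of any free $G$-space is everything in positive degrees---and it is not the hypothesis under which Dold proved the theorem. This discrepancy does not damage the paper downstream: the spaces to which \cref{thm:dold} is applied, $(G_{*2})^{*k}$ and $(G_{*2})^{*k}*G$, are joins of path-connected or discrete free $G$-spaces, hence are simply connected in the relevant range and have the required integral homological connectivity, so they are $n$-connected in the stronger sense that your proof (and Dold's) actually needs. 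Your proposal, read as a proof of Dold's theorem as Dold stated it, is sound; read against the paper's literal definition of connectivity, it proves a correctly strengthened-hypothesis version, which is the version the paper in fact uses.
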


We denote by $S^m\subset \rr^{m+1}$ the unit sphere centered at the origin.

\section{Simple proofs for \cref{thm:two-optimal}}\label{sec:two-children}

A careful analysis of the proof of \cref{thm:main-theorem} in \cref{sec:main-proof} shows that it also yields a proof of \cref{thm:two-optimal} with the optimal constants.  This is because when there are two children, there are no \textit{bad cuts} as we will define in \cref{sec:intuition}.  However, we can present a simpler self-contained proof of this case.  We use the following Lemma by Gale:

\begin{lemma}[Gale 1956 \cite{Gale1956}]\label{lem:Gale}
    Let $n',m$ be positive integers.  There exists a set of $2n'+m$ points on $S^m$ such that every hyperplane through the origin in $\rr^{m+1}$ leaves at least $n'$ points of the set in each of its two open half-spaces.
\end{lemma}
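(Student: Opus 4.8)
The plan is to realize the configuration explicitly on the moment curve and to translate the hyperplane condition into a statement about sign changes of a real polynomial of degree at most $m$. Write $N=2n'+m$, fix distinct reals $t_1<t_2<\cdots<t_N$, and consider the points
$$ v_i=(-1)^i\gamma(t_i),\qquad \gamma(t)=(1,t,t^2,\ldots,t^m)\in\rr^{m+1}, $$
together with their radial projections $x_i=v_i/\lVert v_i\rVert\in S^m$. Since scaling by a positive constant does not change on which side of a hyperplane through the origin a point lies, it suffices to verify the half-space condition for the $v_i$. A hyperplane through the origin is the zero set of $x\mapsto\langle a,x\rangle$ for some $a=(a_0,\ldots,a_m)\neq 0$, and
$$ \langle a,v_i\rangle=(-1)^i\,p(t_i),\qquad p(t)=a_0+a_1t+\cdots+a_mt^m, $$
where $p$ is a nonzero polynomial of degree at most $m$. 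Thus the number of $v_i$ in the open half-space $\{\langle a,\cdot\rangle>0\}$ equals the number of indices $i$ with $(-1)^ip(t_i)>0$, and I must show that this count and its counterpart for the opposite sign are both at least $n'$.

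First I would treat the generic case in which $p(t_i)\neq 0$ for every $i$, i.e.\ the hyperplane avoids all of the $x_i$. Since $p$ has at most $m$ real roots, the sign of $p$ changes at most $m$ times as $t$ runs through $t_1<\cdots<t_N$, so $\{1,\ldots,N\}$ splits into at most $m+1$ maximal blocks of consecutive indices on each of which $p(t_i)$ has constant sign. Within a single block the factor $(-1)^i$ alternates, so the values $(-1)^ip(t_i)$ strictly alternate in sign; hence a block of length $\ell$ contributes at least $\lfloor\ell/2\rfloor$ indices with $(-1)^ip(t_i)>0$ and at least $\lfloor\ell/2\rfloor$ with $(-1)^ip(t_i)<0$. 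Summing over the at most $m+1$ blocks,
$$ \#\{i:(-1)^ip(t_i)>0\}\;\ge\;\sum_j\left\lfloor\frac{\ell_j}{2}\right\rfloor\;\ge\;\frac{N-(m+1)}{2}\;=\;\frac{2n'-1}{2}, $$
and since the left-hand side is an integer it is at least $n'$; the same bound holds for the negative sign. This already fixes the constant $2n'+m$: it is precisely what forces the two sums of floors past $n'$.

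The main obstacle is the degenerate case, in which the hyperplane passes through one or more of the $x_i$, so that some $p(t_i)=0$ and the clean alternation inside a block can be disrupted by the vanishing positions. The key observation that I expect to rescue the count is a multiplicity bookkeeping: a root $t_i$ at which the sign of $p$ does not change has even multiplicity and therefore costs at least $2$ against the degree budget while creating no new block boundary, whereas a root that produces an actual sign flip costs at least $1$. Consequently the number of blocks stays bounded by $m+1$ after accounting for the sign-preserving contacts, and one checks—by tracking which parity classes the vanishing indices occupy—that the positions lost to zeros are compensated by the corresponding reduction in the block count, so that both counts remain at least $n'$. I expect this accounting for hyperplanes meeting the configuration to be the only delicate point; the generic argument carries the entire numerical content of the statement, and the degenerate case is a finite case analysis anchored by the even-multiplicity fact.
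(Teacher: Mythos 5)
Your construction and your generic-case count follow exactly the route the paper itself points to (Matou\v{s}ek's moment-curve argument, which the paper cites instead of proving the lemma): alternating signs $(-1)^i\gamma(t_i)$, reduction to counting the signs of $(-1)^ip(t_i)$ for a nonzero polynomial $p$ of degree at most $m$, at most $m+1$ constant-sign blocks, the per-block bound $\lfloor\ell/2\rfloor$, and integrality. That part is complete and correct. The genuine gap is the degenerate case, and the bookkeeping you sketch for it does not close the argument. Under your own definition of a block (a maximal run of \emph{consecutive} indices on which $p(t_i)$ has constant sign), an even-multiplicity root at a sample point \emph{does} create a new block boundary, because deleting the vanishing index breaks consecutiveness; if instead you allow a block to jump over such an index, the strict alternation of $(-1)^ip(t_i)$ inside that block fails and the bound $\lfloor\ell/2\rfloor$ is no longer valid. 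More seriously, the claim that ``the positions lost to zeros are compensated by the corresponding reduction in the block count'' is exactly the content of the degenerate case and is asserted, not proved; your cost estimates alone do not imply it. Concretely, if all $m$ roots of $p$ are simple and all of them are sample points, then $z=m$ indices are deleted and there are up to $m+1$ blocks, so block counting gives only $(N-z-(m+1))/2=(2n'-m-1)/2$, far below $n'$.

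What rescues this case is a parity fact you never state: if $p(t_i)=0$ with \emph{odd} multiplicity (and no other roots nearby), then $p(t_{i-1})$ and $p(t_{i+1})$ have opposite signs while $(-1)^{i-1}=(-1)^{i+1}$, so $(-1)^{i-1}p(t_{i-1})$ and $(-1)^{i+1}p(t_{i+1})$ have \emph{opposite} signs; deleting index $i$ therefore preserves the alternation, i.e.\ an odd root at a sample point costs one unit of degree and creates no break in the reduced sign sequence, whereas an even root does create a break but costs two units. The uniform statement is: let $z$ be the number of sample points where $p$ vanishes and $c$ the number of non-alternations in the reduced sequence of the $N-z$ signs; a gap containing $g$ vanishing sample points forces root multiplicity at least $g$ inside it, and at least $g+1$ whenever that gap is a non-alternation (the required parity of the multiplicity then differs from the parity of $g$). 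Summing over gaps gives $z+c\le m$, hence each sign occurs at least $(N-z-c-1)/2\ge(2n'-1)/2$ times, so at least $n'$ times by integrality. With this replacement your proof is complete; as written, the degenerate case---which, as you say, is the only delicate point---is not established, and the mechanism you propose for it is incorrect as stated.
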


\begin{proof}[Proof of \cref{thm:two-optimal}]
    If $n+m$ is odd, we can add an empty cookie to the set.  Then, the numer of cookies is $2n'+m$ for some $n'$.  Construct a set $S \subset S^m$ of cardinality $2n'+m$ as in \cref{lem:Gale}.  We pair $S$ with the set of cookies, and assign to each point of $S$ a total of $m$ labels: the amount of frosting of each kind that its corresponding cookie has.  We can consider the set $S$ with the labels corresponding to a single kind of frosting as a discrete measure.  Therefore, we now have $m$ measures on $\rr^{m+1}$.  Take an additional discrete measure concentrated at the origin in $\rr^{m+1}$.

    Apply the ham sandwich theorem to these $m+1$ measures on $\rr^{m+1}$.  We obtain a hyperplane $H$ through the origin such that its two closed half-spaces have at least half of each kind of measure (we double-count the weights of the points contained in $H$).  The cookies corresponding to the points in $H$ are those we may need to cut to balance the amount of frosting of each kind.  Due to \cref{lem:Gale}, each child receives at least $n'$ full cookies.  Note that, if we added an empty cookie at the start, the number of original full cookies a child receives is at least $n'-1 = \lfloor\frac{n-m}{2}\rfloor$.  If we did not add an empty cookie, then $n' = \frac{n-m}{2}$.
\end{proof}

The proof of Gale's lemma is not hiding any advanced technique under the rug.  It requires us to place the points in a moment curve in a hyperplane in $\rr^{m+1}$ an project radially to $S^m$ using an alternating argument (see \cite{Matousek2003}*{p.65}).  We also show a non-topological proof of \cref{thm:two-optimal} for $m=2$ which gives us an algorithm to find a distribution.

\begin{proof}
    As in the previous solution, assume that the number of cookies is $2n'+2$ without loss of generality.  Suppose the cookies have black frosting and white frosting.  Let $C_1$ be the cookie with the most black frosting, and let $C_2$ be the cookie with the most white frosting among the remaining cookies.  We show that we can make a fair distribution by cutting at most $C_1$ and $C_2$.

    Order the rest of the $2n'$ cookies and to cookie $i$ assign the pair $(x_i,y_i)$, where $x_i$ is the amount of black frosting it has.  Assume without loss of generality that $x_1 \ge x_2 \ge \dots \ge x_{2n}$.  Consider the pairs
    $\{(x_1,y_1),(x_2,y_2)\}$, $\{(x_3,y_3),(x_4,y_4)\}$, $\dots $, $\{(x_{2n'-1},y_{2n'-1})$, $(x_{2n'},y_{2n')}\}$.  We will give one cookie in each pair to each child.  This way, each child will receive $n'$ full cookies.  Let us first show that, regardless of how we do this distribution, the difference in black frosting between the two children cannot exceed $x_1$.  In particular, by cutting $C_1$ we can balance the amount of black frosting each child receives.

    The largest difference we could have in black frosting is achieved by giving one child the cookie with the most black frosting in each pair.  The difference would be
    \begin{align*}
    (x_1-x_2) + (x_3-x_4) + \dots + (x_{2n'-1}-x_{2n'}) & = \\
    x_1-(x_2 - x_3)-(x_4-x_5) - \dots - (x_{2n'-2}-x_{2n'-1})-x_{2n'} & \le x_1.
    \end{align*}

    Therefore, the difference in black frosting is in the interval $[-x_1,x_1]$, as we wanted.  It suffices to show that there is a way to make the choices in the pairs so that the difference in white frosting is in the interval $[-\max_{i}y_i,\max_iy_i]$.  For $i=1,\dots,n'$, let $b_i = y_{2i-1}-y_{2i}$ and let $c_1 \ge c_2 \ge \dots \ge c_{n'}$ be a permutation of $b_1,\dots,b_{n'}$.  By the same argument, the value of the alternating sum $c_1-c_2+c_3 - \dots + (-1)^{n'}c_{n'}$ is in the interval $[0,c_1]$, and $c_1 \le \max_i y_i$.  Therefore, by making the choice that gives us this difference in white frosting we can cut $C_2$ to balance the white frosting. 
    \end{proof}

\section{Intuitive idea of the main proof and distribution arguments}\label{sec:intuition}

The main idea behind the proof will be the following.  We first label the cookies from $1$ to $n$ in some order.  We start by giving cookie $1$ to child $1$ and we will start to assign them in a cyclic order modulo $r$: the next cookie goes to child $2$, the next cookie goes to child $3$ and so on.  At some point, if we are to give a cookie to child $i$, we may instead cut that cookie and give part of it to child $i$ and part of it to child $j$.  Then we update our counting and the next cookie goes to child $j+1$.  We continue to do this, allowing ourselves to cut at most $(r-1)m$ cookies.  

\begin{figure}
    \centering
    \includegraphics[]{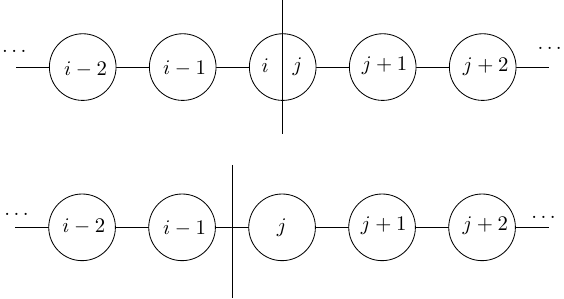}
    \caption{A representation of the possible cuts.  If $j=i$, it is equivalent to not cutting the cookie, so it can be ignored.  If $j=i-1$, no child is being skipped.  If we wanted to change our index of which child gets the next cookie (as in the bottom image), we have a situation equivalent to ``cutting'' the cookie with label $j$ and giving nothing to child $i$.  This situation of cutting between cookies may happen in \cref{sec:main-proof}, but it does not affect the analysis of this section.}
    \label{fig:enter-label}
\end{figure}

In the next section, we will construct a topological space that parametrizes the set of all possible partitions, and prove that for some partition all children receive the same amount of some kind of frosting.  In the rest of this section, we prove that a partition as described gives each child the desired number of full cookies, provided one condition on the cut cookies is met.

If we never cut cookies, each child is receiving $\lfloor n/r\rfloor$ cookies, which would be an ideal scenario.  If at some point we are cutting a cookie but $i=j$ in the scenario above, then we can ignore the cut, keep that cookie whole, and continue as before.  If $j=i-1$, then the next full cookie goes to child $i$, which means that, excluding the cut cookie, child $i$ received the next full cookie.  If we only have this kind of cuts, we would ultimately give each child $\lfloor \frac{n-(r-1)m}{r}\rfloor$ cookies, precisely the number from \cref{conj:main-conj}.  Thus, every cut in which $j \neq i$ and $j \neq i-1$ makes us skip at least child $i$, so they could potentially receive one fewer cookie than the rest.  

Let us call these the \textit{bad cuts}.  Imagine that we start to distribute the cookies and each child has an initial expectation to receive $\lfloor \frac{n-(r-1)m}{r}\rfloor$ cookies.  If we ignore the amount of frosting on each cookie, a bad cut is essentially the same as taking the cookies that should go to children $i,i+1,\dots, j$ (cyclically modulo $r$) and moving those to the end.  This way, the first cookie after the cut goes to $j+1$, and the cookies that were moved to the end will be distributed according to our algorithm (some may go to these children).  Therefore, if we consider $\overline{j+1-i} \in \zz_r = \{0,1,\dots,r-1\}$ the congruence of $j+1-i$ modulo $r$, the maximum possible number of cookies received for the children skipped goes from $\lfloor \frac{n-(r-1)m}{r}\rfloor$ to $\lfloor \frac{n+(\overline{j-i})-(r-1)m}{r}\rfloor -1 \ge \lfloor \frac{n+1-(r-1)m}{r}\rfloor -1$ due to this bad cut.

If there are $b$ bad cuts, the worst case scenario is that we always skipped the same child $i$ and they would receive $\lfloor\frac{n+b-(r-1)m}{r}\rfloor -b$ full cookies at the end.  In the next section, the distributions we make will have at most $\lfloor \frac{(r-1)m}{2}\rfloor$ bad cuts.  This implies the guaranteed number of full cookies from \cref{thm:main-theorem}: 
\[
\left\lfloor \frac{n-\left\lceil\frac{m(r-1)}{2}\right\rceil}r\right\rfloor-\left\lfloor\frac{m(r-1)}{2}\right\rfloor.
\]

\section{main proof}\label{sec:main-proof}

We first prove \cref{thm:main-theorem} when $r$ is a prime number.  For convenience, we will write $p$ for $r$ when we assume it is prime.

To parametrize our space of partitions, we use a construction inspired by one used by Vu\v{c}i\'c and \v{Z}ivaljevi\'c in their proof of a bound for Sierksma's conjecture  \cite{Vucic1993}.  This has also been used recently to count Tverberg partitions for polytopes \cite{soberon2024improved}.  Consider $G=\zz_p$ as a discrete set with $p$ points.  The join $G*G$ is a complete bipartite graph, $K_{p,p}$.  Instead of using this set in our construction, we will only use a cycle in it.  Formally, we will use the space

\[G_{*2}= \{\alpha x + (1-\alpha) y \in G*G:x-y \equiv 0 \mod p \mbox{ or } x-y \equiv 1 \mod p\}\]

\begin{figure}
    \centering
    \includegraphics[]{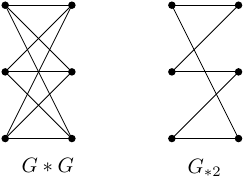}
    \caption{An illustration of $G*G$ and $G_{*2}$ for $G= \zz_3$.}
    \label{fig:joins}
\end{figure}

Note that $G_{*2}$ is a connected $G$-space.  Intuitively, we drop as many edges as possible from $G*G$ while maintaining a free action of $G$ and the connectedness of the space, see \cref{fig:joins}.  This will allow us to have much more control over the resulting partition, which will increase the number of full cookies that we can guarantee that each child has by roughly $\frac{1}{2}(p-1)m$.

\begin{proof}[Proof of \cref{thm:main-theorem} for $r$ prime]
    Represent each cookie as an open subinterval in the interval $[0,1]$ without any overlaps.  For $j=1,\dots,k$, we can assign to the $i$-th interval corresponding to a cookie a uniform measure $\mu^j_i$ according to how much frosting of type $j$ the cookie has.  This gives us a labeled measure $\mu^j = (\mu^j_1,\dots, \mu^j_n)$.

    Let $G=\zz_p=\{0,1,\dots,p-1\}$ and $s=(p-1)m$.  The $(s+1)$-fold join $G^{*(s+1)}$ parametrizes the set of labeled partitions of $[0,1]$ into $s+1$ intervals, where each interval has a label in $\zz_p$.  \cref{fig:labels} illustrates this parametrization.  We want a sparser set of partitions, so we restrict ourselves to a particular subset.  If $s$ is odd, we instead use $K=(G_{*2})^{*((s+1)/2)}$.  If $s$ is even, we use $K=(G_{*2})^{*(s/2)}*G$.  Given a labeled partition parametrized by $K$, for each $i \in [p]$, let $A_i$ be the subset of $[0,1]$ of all points with label $i-1$.  Note that $G_{*2}$ corresponds to the partitions of $[0,1]$ into two intervals, each labeled with an element of $\zz_p$.  However, if the label of the left interval is $i$, then the label of the right interval can only be $i$ or $i-1$.  Therefore, in the distribution below, the number of copies of $G_{*2}$ we use in $K$ corresponds to the number of cuts which are not bad cuts.

    \begin{figure}
        \centering
        \includegraphics{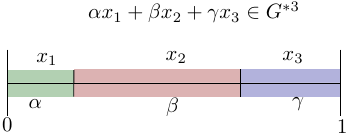}
        \caption{This figure shows how an element of $G^{*3}$ corresponds to a partition of $[0,1]$ into three labeled intervals.  The coefficients $\alpha, \beta, \gamma$ of the convex combination of an element of $G^{*3}$ tell us the lengths of the three parts of the partition in order.  The elements $x_1,x_2,x_3$ of $G$ in the convex combination give us the labels of the intervals.}
        \label{fig:labels}
    \end{figure}

    For $j=1,\dots,m$ we define a function
    \begin{align*}
        f_j : K & \to \rr^p \\
        (A_1\dots, A_p) & \mapsto \left( y_1,\dots,y_p \right) \\
        y_u & = \sum_{i + t \equiv u \mod p} \mu^j_i(A_t).
    \end{align*}

The function $f_j$ is continuous.  The action of $G$ on itself induces a free action on $K$, which corresponds to relabelling the sets $A_1,\dots, A_p$ cyclically.  The space $\rr^p$ also has an action of $G$ induced by cycling the coordinates.  This action is free on $\rr^p\setminus\{0\}$.  With these two actions, the function $f_j$ is $G$-equivariant.

The way that $f_j$ is splitting the labeled measure $\mu_j$ corresponds to the distribution of the cookies as described in \cref{sec:intuition}.  As the value of $i$ increases, the measure $\mu^j_{i}$ is assigned to the next coordinate $y_u$ cyclically.  If we have a change from $A_t$ to $A_{t'}$, it corresponds to a cut as in \cref{sec:intuition}.  If the change happens on cookie $i$, it corresponds to a cookie being cut that has part of it given to child $i+t$ and part given to child $i+t'$.

Consider $W_p \subset \rr^p$ the orthogonal complement to $(1,1,\dots,1)$, consisting of all vectors whose sum of coordinates is $0$, and let $\pi : \rr^p \to W_p$ be the orthogonal projection onto $W_p$.  The space $W_p$ has an action of $\zz_p$, and $\pi$ is a $G$-equivariant function.  Note that $\pi \circ f_j = 0$ if and only if $y_1 = \dots = y_p$, which means that the $j$-th kind of frosting would be fairly split.

Now consider the function

\begin{align*}
    f&:  K \to (W_p)^{\oplus m} \\
    f&= (f_1,\dots, f_m)
\end{align*}

If $f$ has a zero, it must correspond to a set of cuts and a distribution such that each kind of frosting is evenly distributed among the children.  Let us assume for a contradiction that $f$ has no zero.

Denote by $S((W_p)^{\oplus m})$ the unit sphere in $(W_p)^{\oplus m}$.  The space $(W_p)^{\oplus m}$ has dimension $m(p-1)$, so the space $S((W_p)^{\oplus m})$ has dimension $m(p-1)-1$.  Moreover, the action of $\zz_p$ on $S((W_p)^{\oplus m})$ is free.  Consider the following map.

\begin{align*}
    g:  K &\to S((W_p)^{\oplus m}) \\
    g(\cdot)&= \frac{f(\cdot)}{\|f(\cdot)\|} 
\end{align*}

Due to Dold's theorem, the existence of $g$ shows that $K$ is at most $((p-1)m-2)$-connected.  What remains to show to reach a contradiction is that $K$ is at least $((p-1)m-1)$-connected.  The space $G_{*2}$ is path-connected, so it is $0$-connected.  Therefore, if $s$ is odd, we know that the connectedness of the $((s+1)/2)$-fold join $K=(G_{*2})^{*(s+1)/2}$ is at least $2((s+1)/2)-2 = s-1 = (p-1)m-1$.  If $s$ is even, then the connectedness of the $(s/2)$-fold join $(G_{*2})^{*s/2}$ is at least $2(s/2)-2 = s-2$.  Therefore, the connectedness of $K = (G_{*2})^{*s/2}*G$ is at least $(s-2)+(-1)+2 = s-1 = (p-1)m-1$, as we wanted.
\end{proof}

To finish the proof of \cref{thm:main-theorem}, we need to establish the following lemma.

\begin{lemma}
    If \cref{thm:main-theorem} holds for $r=a$ and for $r=b$, then it holds for $r=ab$.
\end{lemma}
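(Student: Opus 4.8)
The multiplicative lemma is the standard way to bootstrap from prime cases to composite $r$, so I would proceed by a two-stage refinement. Given $n$ cookies to distribute among $ab$ children, I first treat the $ab$ children as $a$ \emph{teams} of $b$ children each. Applying \cref{thm:main-theorem} with $r=a$, I split the frosting fairly among the $a$ teams, cutting at most $(a-1)m$ cookies, and each team receives at least $\left\lfloor \frac{n-\left\lceil\frac{m(a-1)}{2}\right\rceil}a\right\rfloor-\left\lfloor\frac{m(a-1)}{2}\right\rfloor$ full cookies. Crucially, each team now holds a collection of cookies (some full, and possibly some pieces of the cut cookies) on which the $m$ measures restrict to give each team exactly a $(1/a)$-share of each kind of frosting.

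\textbf{The second stage applies the $r=b$ case within each team.} For a fixed team, regard the full cookies it received as its new supply of whole cookies, and treat each \emph{piece} of a cut cookie that the team holds as a separate (smaller) cookie carrying its own $m$ measures. Since the team's total of each frosting type equals $(1/a)$ of the global total, applying \cref{thm:main-theorem} with $r=b$ to this team's cookies splits each kind of frosting evenly among the $b$ children of the team, so each child ends with exactly $\frac{1}{b}\cdot\frac{1}{a} = \frac{1}{ab}$ of each kind globally. This uses at most $(b-1)m$ additional cuts \emph{per team}. Running this over all $a$ teams produces a fair distribution among all $ab$ children.

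\textbf{The main obstacle is controlling the total number of cuts and the full-cookie count.} Summed naively, the cut budget is $(a-1)m + a\cdot(b-1)m = (ab-1)m = (r-1)m$, which is exactly the target — so the cut count works out cleanly, \emph{provided} that a piece of a cookie already cut in stage one is not counted as requiring a fresh cut when it is left whole inside its team in stage two. The delicate point is that the second-stage cuts act on objects that may themselves be fragments; I would argue that cutting a fragment still counts as cutting at most one original cookie, and that a cookie cut in stage one and then cut again in stage two is charged only to the team that received the relevant fragment, so no original cookie is double-charged beyond the arithmetic above. The harder bookkeeping is the full-cookie guarantee: I must verify that nesting the two floor-expressions yields at least the bound claimed in \cref{thm:main-theorem} for $r=ab$. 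Each child receives at least the $r=b$ guarantee applied to their team's full-cookie supply, and that supply is itself the $r=a$ guarantee; I expect to check that the composite of the two floor bounds dominates $\left\lfloor \frac{n-\left\lceil\frac{m(ab-1)}{2}\right\rceil}{ab}\right\rfloor-\left\lfloor\frac{m(ab-1)}{2}\right\rfloor$, using $\left\lfloor\lfloor x/a\rfloor/b\right\rfloor=\lfloor x/(ab)\rfloor$ together with the subadditivity of the bad-cut count $\lfloor(a-1)m/2\rfloor + a\lfloor(b-1)m/2\rfloor \le \lfloor(ab-1)m/2\rfloor$. This inequality on the number of bad cuts, interpreted through the distribution argument of \cref{sec:intuition}, is where I anticipate the bulk of the technical care.
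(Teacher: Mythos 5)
Your overall strategy is the same as the paper's (compose two applications of \cref{thm:main-theorem}, first across groups, then within each group), but there is a genuine gap in how you handle the pieces of cut cookies, and it is precisely the point where the paper introduces its key trick. You propose to treat each fragment a team receives as a separate small cookie and apply the $r=b$ case to the enlarged collection. The guarantee you then get counts \emph{team-cookies}, and a fragment is not a full original cookie: a child's guaranteed allotment of ``full cookies'' from the second stage may consist largely of fragments. A single team can hold up to $(a-1)m$ fragments, and enlarging the team's cookie count by $f$ only increases the floor guarantee by roughly $f/b$, while up to $f$ of the guaranteed cookies may be worthless fragments; so each child can lose on the order of $f(1-1/b)$, i.e.\ up to roughly $(a-1)m(b-1)/b$, full original cookies relative to your accounting. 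This loss is not absorbed by the slack in the target bound when the parameters are small (the slack is less than $m$ when a factor equal to $2$ is involved), so the domination you claim fails. The paper's fix is the step you are missing: before the second stage, each group \emph{appends all the fragments it received to one of its full cookies} and treats that union as a single new cookie. Every cookie in the second-stage instance is then either an original full cookie or this one composite, and the composite contains a whole original cookie, so the second-stage guarantee transfers verbatim to original full cookies with no loss; the cut-count arithmetic $(b-1)m + b(a-1)m=(ab-1)m$ is unaffected.

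The deferred arithmetic is also not as routine as you suggest. The composed bound is not a nested floor of the form $\left\lfloor\lfloor x/a\rfloor/b\right\rfloor$ (the penalty terms sit both inside and outside the floors), and ``bad cuts'' are internal to the proof of \cref{thm:main-theorem}; they are invisible when the theorem is invoked as a black box, so no subadditivity statement about them can be used here. The paper instead chooses the grouping so that the second stage uses the \emph{larger} factor (WLOG $a\ge b$, with $b$ groups of $a$ children), reduces the comparison to
\[
\frac{\frac{1}{2}(ab-1)}{ab}+\frac{1}{2}(ab-1) \ \ge\ \frac{b-1}{a}+(a-1),
\]
and checks it by cases: it holds for $b\ge 3$, it holds for $b=2$ only when $a\ge 3$, and the remaining case $a=b=2$ genuinely cannot be closed by composing \cref{thm:main-theorem} with itself along these lines; the paper handles it by invoking the sharper two-children result, \cref{thm:two-optimal}. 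Your write-up should anticipate both this case split and that final appeal.
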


\begin{proof}
    Assume we have $n$ cookies and that $a \ge b$.  We will split the children into $b$ groups of $a$ students each.  We first do a distribution for the $b$ groups.  Then, each of the small groups of $a$ students will append the pieces of cut cookies they received to one of their full cookies and do a distribution of their new set of cookies among themselves.

    First, we count the number of cuts we are making.  In the first distribution we make $(b-1)m$ cuts.  In each of the subsequent distributions we make $(a-1)m$ cuts.  Therefore, we make $(b-1)m + b(a-1)m = (ab-1)m$ cuts, as desired.  Let us now count how many full cookies each child receives.  After the first distribution, each group has at least
    \[
    k= \left\lfloor \frac{n-\frac{1}{2}(b-1)m}{b}\right\rfloor - \frac{1}{2}(b-1)m
    \]
    cookies.
    Note that
    \[
    k \ge \frac{n}{b} - (b-1)m
    \]
    Then, after the second round of cuts each child receives at least
    \[
    \left\lfloor \frac{k-\frac{1}{2}(a-1)m}{a}\right\rfloor - \frac{1}{2}(a-1)m \ge \frac{k}{a}-(a-1)m
    \]
    It suffices to show that

    \begin{align*}
        \frac{k}{a}-(a-1)m &\ge \frac{n-\frac{1}{2}(ab-1)m}{ab} - \frac{1}{2}(ab-1)m \\
                \frac{\frac{n}{b} - (b-1)m}{a}-(a-1)m &\ge \frac{n-\frac{1}{2}(ab-1)m}{ab} - \frac{1}{2}(ab-1)m \\
                \frac{ - (b-1)m}{a}-(a-1)m &\ge \frac{-\frac{1}{2}(ab-1)m}{ab} - \frac{1}{2}(ab-1)m \\
                \frac{\frac{1}{2}(ab-1)}{ab} + \frac{1}{2}(ab-1) &\ge \frac{ (b-1)}{a}+(a-1)
    \end{align*}

    Finally, if $b \ge 3$, we have $\frac{1}{2}(ab-1)\ge \frac{3}{2}a-\frac{1}{2}$, which would be greater than the right-hand side.   If $b=2$, we have to show that
\begin{align*}
    \frac{a-1/2}{2a}+a-1/2 &\ge \frac{1}{a}+a-1 \\
    \frac{1}{2}-\frac{1}{4a} +\frac{1}{2} &\ge \frac{1}{a} \\
    2a - 1 + 2a &\ge 4 \\
    2a &\ge 5.
\end{align*}

As long as $a \ge 3$, we have the result.  For the remaining case $a=b=2$ we can instead use \cref{thm:two-optimal} to estimate how many full cookies each student receives.

After the first cut each has at least

\[
k = \left\lfloor \frac{n-m}{2}\right\rfloor
\]
Therefore, after the second cut we have to show that
\[
\left\lfloor\frac{\left\lfloor\frac{n-m}{2}\right\rfloor-m}{2} \right\rfloor\ge \left\lfloor\frac{n-\frac{3m}{2}}{4}\right\rfloor-\frac{3m}{2},
\]
which can be checked directly.  The case $a=2,b=2$ is also a consequence of \cref{thm:powers-of-two}.
\end{proof}

\section{Remarks}\label{sec:remarks}

If $n+m$ is a multiple of $r$, then $\frac{n-(r-1)m}{r}$ is an integer, so we do not need the floor function in \cref{conj:main-conj}.  We first show the following lemma.

\begin{lemma}
    If \cref{conj:main-conj} holds for $r=a$ when $n+m$ is a multiple of $a$ and for $r=b$ when $n+m$ is a multiple of $b$, then it holds for $r=ab$ when $n+m$ is a multiple of $ab$.
\end{lemma}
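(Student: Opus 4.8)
The plan is to mirror the multiplicative lemma already proved for \cref{thm:main-theorem}, but to exploit the exact divisibility $ab \mid n+m$ so that all floor functions and the attendant case analysis disappear. I would split the $ab$ children into $b$ groups of $a$ children each and distribute in two stages: a coarse distribution among the $b$ groups followed by a fine distribution inside each group. For the coarse stage, note that $ab \mid n+m$ gives $b \mid n+m$, so I may apply \cref{conj:main-conj} for $r=b$ to the $n$ cookies, treating each group as a single recipient. This uses at most $(b-1)m$ cuts and gives each group at least
\[
k = \frac{n-(b-1)m}{b} = \frac{n+m}{b}-m
\]
full cookies, together with a $\tfrac1b$-share of every kind of frosting carried on fragments of the cut cookies.

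In the fine stage, each group forms a sub-problem with exactly $k$ cookies by keeping $k-1$ of its full cookies pristine and gluing all remaining material (any surplus full cookies and all received fragments) onto a single $k$-th cookie. The crucial point is the divisibility
\[
k+m = \frac{n+m}{b},
\]
which is a multiple of $a$ precisely because $ab \mid n+m$; hence \cref{conj:main-conj} for $r=a$ applies to this sub-problem. It cuts at most $(a-1)m$ of the group's cookies and gives each of the $a$ children at least $\frac{k-(a-1)m}{a}$ full sub-cookies. A direct computation then gives
\[
\frac{k-(a-1)m}{a}=\frac{k+m}{a}-m=\frac{n+m}{ab}-m=\frac{n-(ab-1)m}{ab},
\]
which is exactly the target bound of the conjecture for $r=ab$, and is an integer, so no floors appear.

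Then I would verify the two bookkeeping claims. For the cut count, the coarse stage cuts at most $(b-1)m$ cookies and each of the $b$ fine distributions cuts at most $(a-1)m$, for a total of $(b-1)m + b(a-1)m = (ab-1)m$; here I would observe that a cut of the glued cookie corresponds to a single cut point landing in exactly one of its glued pieces, so it cuts at most one original cookie and never inflates the count. For the full-cookie count, every full sub-cookie a child receives is either a pristine original cookie or the whole glued cookie, and in either case it contains at least one full original cookie, so each child ends with at least $\frac{n-(ab-1)m}{ab}$ full original cookies.

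The hard part will be the fine-stage bookkeeping around the glued cookie: confirming that gluing surplus full cookies and fragments into one sub-cookie neither inflates the number of original cookies that get cut (one should count cut points, each of which meets at most one glued piece) nor deflates the guaranteed number of full original cookies (a whole glued cookie still delivers at least one pristine original cookie). Everything else reduces to the two divisibility checks $b \mid n+m$ and $a \mid k+m$, both immediate from $ab \mid n+m$, and to the exact arithmetic displayed above.
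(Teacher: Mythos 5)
Your proposal is correct and follows essentially the same route as the paper: split the $ab$ children into $b$ groups of $a$, apply the case $r=b$ (exact because $b \mid n+m$), have each group absorb its fragments into one of its full cookies, then apply the case $r=a$ (exact because $a$ divides $k+m = \frac{n+m}{b}$), with identical arithmetic for both the cut count $(b-1)m + b(a-1)m = (ab-1)m$ and the bound $\frac{n-(ab-1)m}{ab}$. The only difference is that you spell out the gluing bookkeeping (cut points landing in at most one glued piece, full sub-cookies containing a full original cookie) that the paper leaves implicit by referring back to its earlier subdivision lemma.
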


\begin{proof}
    We follow the same subdivision argument as in the proof of \cref{thm:main-theorem}.  Assume $n+m$ is a multiple of  $ab$.  Since $n+m$ is a multiple of $b$, we can make the first subdivision exaclty.  Then, each group of $a$ children received $\frac{n-(b-1)m}{b}$ full cookies.  Note that $\left(\frac{n-(b-1)m}{b}\right)+m = \frac{n+m}{b}$, which is a multiple of $a$.  We can therefore make the second subdivision exactly.  As before, the subdivision argument gives us the correct number of cuts.  The number of full cookies that each child receives is at least
    \[
\frac{\frac{n-(b-1)m}{b}-(a-1)m}{a} = \frac{n-(b-1)m-b(a-1)m}{ab}= \frac{n-(ab-1)m}{ab},
\]
as we wanted to show.
\end{proof}

\begin{proof}[Proof of \cref{thm:powers-of-two}]
    Use induction on $t$ and \cref{thm:two-optimal} to prove the case when $n+m$ is a multiple of $r$.  If $n+m$ is not a multiple  of $r$, then add empty cookies until $n+m$ is a multiple of $r$.  If we added $x$ cookies, the number of full original cookies that each child receives is at least
\[
\left\lceil \frac{n-(r-1)m}{r}\right\rceil-x \ge \left\lceil \frac{n-(r-1)m}{r}\right\rceil-(r-1) = \left\lfloor \frac{n-(r-1)m}{r}\right\rfloor-(r-2).
\]
\end{proof}

In the introduction, we mentioned that there is a naive approach to \cref{conj:main-conj} with the test map scheme.  We describe it here to highlight the benefits of the construction we used.  As before, we place our cookies in the interval $[0,1]$ and use $\zz_r^{*(s+1)}$ to parametrize the partitions of $[0,1]$ using $s$ cuts where each interval has a label in $\zz_r$.  This time we do not attempt to do the alternating distribution and simply give to child $i$ the cookies in intervals with label $i-1$.  We do not need to represent the frosting as a labeled measure, we can simply use a measure $\mu_j$ for all the frosting of type $j$.

In addition to the $m$ measures $\mu_1, \dots, \mu_m$ induced by the different frostings, we give to each cookie an additional measure $\mu_{m+1}$ which has value $1$ on each cookie (i.e., the total cookie dough in the cookie).  We now seek to find a partition that divides fairly each of the $m+1$ measures.  As before, if there is no such fair partition, we can construct a continuous $\zz_r$-equivariant function 
\[
g:\zz_r^{*(s+1)}\to S((W_r)^{\oplus (m+1)})
\]
If for $r$ prime, to show that this map does not exist via Dold's theorem we require $r$ to be prime, and $s \ge (r-1)(m+1)$.  Each child receives a total amount $n/r$ of cookie dough.  The amount that comes from non-full cookies is at most $(r-1)(m+1)$, so each child receives at least $\lceil n/r \rceil - (r-1)(m+1)$ full cookies.

As a final note, consider the ways we cut cookies.  In the proof of \cref{thm:main-theorem}, we represented each cookie $C$ in an interval $I_C$ contained in $[0,1]$.  Then, each kind of frosting was represented by a uniform measure in $I_C$.  Therefore, when we cut a cookie and give its pieces to children $i$ and $j$, we give child $i$ the same percentage of each kind of frosting in $C$ and child $j$ the rest. However, we can choose any measures that are absolutely continuous with respect to the Lebesgue measure to represent the frostings and the proof holds.

For example, we could have uniform measures with disjoint supports in even smaller intervals.  This corresponds to first trying to give child $i$ all frosting of some kind before cutting out frosting of another kind.  The fact that, regardless of how we represent the cookies, the number of cuts and the guaranteed number of full cookies remain unchanged is surprising.  This is also the reason why we said in the introduction that if the cookies are subsets of $\rr^d$, the cuts can be made by hyperplanes.  We simply scale down and then project each cookie to the $x$-axis for the setup of the proof.  The inverse image of the cutting points in $[0,1]$ will give us the cutting hyperplanes in $\rr^d$.

Additionally, the order in which we place the cookies has an impact on the possible distributions we obtain.  It seems likely that this can be used to find a lower bound on the number of distributions achieving \cref{thm:main-theorem}.  Since \cref{thm:main-theorem} is not optimal, we have not attempted to bound the number of valid distributions.
\section{Acknowledgments}

I would like to thank Csaba Bir\'o, who told me about the case $r=2,m=1$ as a fun puzzle.  That case appeared in the 2008 Arany D\'aniel Mathematics Competition.

\begin{bibdiv}
\begin{biblist}

\bib{Asada:2018ix}{article}{
      author={Asada, Megumi},
      author={Frick, Florian},
      author={Pisharody, Vivek},
      author={Polevy, Maxwell},
      author={Stoner, David},
      author={Tsang, Ling~Hei},
      author={Wellner, Zoe},
       title={{Fair Division and Generalizations of Sperner- and KKM-type
  Results}},
        date={2018},
     journal={SIAM J. Discrete Math.},
      volume={32},
      number={1},
       pages={591\ndash 610},
}

\bib{Avvakumov2021}{article}{
      author={Avvakumov, Sergey},
      author={Karasev, Roman},
       title={Envy-free division using mapping degree},
        date={2021},
        ISSN={0025-5793,2041-7942},
     journal={Mathematika},
      volume={67},
      number={1},
       pages={36\ndash 53},
         url={https://doi.org/10.1112/mtk.12059},
}

\bib{Alon1987}{article}{
      author={Alon, Noga},
       title={Splitting necklaces},
        date={1987},
        ISSN={0001-8708},
     journal={Adv. in Math.},
      volume={63},
      number={3},
       pages={247\ndash 253},
         url={https://doi.org/10.1016/0001-8708(87)90055-7},
}

\bib{Alon1986}{article}{
      author={Alon, Noga},
      author={West, D.~B.},
       title={{The Borsuk-Ulam theorem and bisection of necklaces}},
        date={1986},
     journal={Proceedings of the American Mathematical Society},
      volume={98},
      number={4},
       pages={623\ndash 628},
}

\bib{BKC05}{misc}{
      author={Bogdanov, I.},
      author={Chelnokov, G.},
      author={Kulikov, E.},
       title={Problem 4. robbers sharing boxed loot},
         how={https://www.turgor.ru/lktg/2005/4/index.htm},
        date={2005},
        note={17th Tournament of Towns Summer Conference
  https://www.turgor.ru/lktg/2005/4/index.htm},
}

\bib{Barany2018}{article}{
      author={B\'ar\'any, Imre},
      author={Sober\'on, Pablo},
       title={Tverberg's theorem is 50 years old: a survey},
        date={2018},
        ISSN={0273-0979,1088-9485},
     journal={Bull. Amer. Math. Soc. (N.S.)},
      volume={55},
      number={4},
       pages={459\ndash 492},
         url={https://doi.org/10.1090/bull/1634},
}

\bib{Blagojevic2018}{article}{
      author={Blagojevi\'c, Pavle V.~M.},
      author={Sober\'on, Pablo},
       title={Thieves can make sandwiches},
        date={2018},
        ISSN={0024-6093,1469-2120},
     journal={Bull. Lond. Math. Soc.},
      volume={50},
      number={1},
       pages={108\ndash 123},
         url={https://doi.org/10.1112/blms.12109},
}

\bib{Dold:1983wr}{article}{
      author={Dold, Albrecht},
       title={{Simple proofs of some Borsuk--Ulam results}},
        date={1983},
     journal={Contemp. Math},
      volume={19},
       pages={65\ndash 69},
}

\bib{FilosRatsikas2021}{inproceedings}{
      author={Filos-Ratsikas, Aris},
      author={Hollender, Alexandros},
      author={Sotiraki, Katerina},
      author={Zampetakis, Manolis},
       title={A topological characterization of modulo-{$p$} arguments and
  implications for necklace splitting},
        date={2021},
   booktitle={Proceedings of the 2021 {ACM}-{SIAM} {S}ymposium on {D}iscrete
  {A}lgorithms ({SODA})},
   publisher={[Society for Industrial and Applied Mathematics (SIAM)],
  Philadelphia, PA},
       pages={2615\ndash 2634},
         url={https://doi.org/10.1137/1.9781611976465.155},
}

\bib{Gale1956}{incollection}{
      author={Gale, David},
       title={Neighboring vertices on a convex polyhedron},
        date={1956},
   booktitle={Linear inequalities and related systems},
      series={Ann. of Math. Stud.},
      volume={no. 38},
   publisher={Princeton Univ. Press, Princeton, NJ},
       pages={255\ndash 263},
}

\bib{Goldberg1985}{article}{
      author={Goldberg, Charles~H.},
      author={West, D.~B.},
       title={{Bisection of Circle Colorings}},
        date={1985},
     journal={SIAM Journal on Algebraic Discrete Methods},
      volume={6},
      number={1},
       pages={93\ndash 106},
}

\bib{Hobby1965}{article}{
      author={Hobby, Charles~R.},
      author={Rice, John~R.},
       title={A moment problem in {$L\sb{1}$} approximation},
        date={1965},
        ISSN={0002-9939,1088-6826},
     journal={Proc. Amer. Math. Soc.},
      volume={16},
       pages={665\ndash 670},
         url={https://doi.org/10.2307/2033900},
}

\bib{Hubard2024}{article}{
      author={Hubard, Alfredo},
      author={Sober{\'o}n, Pablo},
       title={Bisecting masses with families of parallel hyperplanes},
        date={2024},
     journal={arXiv preprint arXiv:2404.14320},
}

\bib{Jojic2021}{article}{
      author={Joji\'c, Du\v{s}ko},
      author={Panina, Gaiane},
      author={{\v{Z}}ivaljevi\'c, Rade},
       title={Splitting necklaces, with constraints},
        date={2021},
        ISSN={0895-4801,1095-7146},
     journal={SIAM J. Discrete Math.},
      volume={35},
      number={2},
       pages={1268\ndash 1286},
         url={https://doi.org/10.1137/20M1331949},
}

\bib{Liu2024}{article}{
      author={Liu, Shengxin},
      author={Lu, Xinhang},
      author={Suzuki, Mashbat},
      author={Walsh, Toby},
       title={Mixed fair division: a survey},
        date={2024},
        ISSN={1076-9757,1943-5037},
     journal={J. Artificial Intelligence Res.},
      volume={80},
       pages={1373\ndash 1406},
}

\bib{Matousek2003}{book}{
      author={Matou\v{s}ek, Ji\v{r}\'i},
       title={{Using the Borsuk-Ulam theorem: lectures on topological methods
  in combinatorics and geometry}},
      series={Springer, Berlin, Heidelberg},
   publisher={Springer, Berlin, Heidelberg},
        date={2003},
}

\bib{Meunier2014}{article}{
      author={Meunier, Fr\'ed\'eric},
       title={Simplotopal maps and necklace splitting},
        date={2014},
        ISSN={0012-365X,1872-681X},
     journal={Discrete Math.},
      volume={323},
       pages={14\ndash 26},
         url={https://doi.org/10.1016/j.disc.2014.01.008},
      review={\MR{3166050}},
}

\bib{McGinnis2024a}{article}{
      author={McGinnis, Daniel},
      author={Zerbib, Shira},
       title={A sparse colorful polytopal {KKM} theorem},
        date={2024},
        ISSN={0179-5376,1432-0444},
     journal={Discrete Comput. Geom.},
      volume={71},
      number={3},
       pages={945\ndash 959},
         url={https://doi.org/10.1007/s00454-022-00464-y},
      review={\MR{4716255}},
}

\bib{procaccia2015cake}{incollection}{
      author={Procaccia, Ariel~D.},
       title={Cake cutting algorithms},
        date={2016},
   booktitle={Handbook of computational social choice},
   publisher={Cambridge Univ. Press, New York},
       pages={311\ndash 329},
}

\bib{RoldanPensado2022}{article}{
      author={Rold\'{a}n-Pensado, Edgardo},
      author={Sober\'{o}n, Pablo},
       title={A survey of mass partitions},
        date={2022},
        ISSN={0273-0979},
     journal={Bull. Amer. Math. Soc. (N.S.)},
      volume={59},
      number={2},
       pages={227\ndash 267},
         url={https://doi.org/10.1090/bull/1725},
}

\bib{Stromquist1980}{article}{
      author={Stromquist, Walter},
       title={How to cut a cake fairly},
        date={1980},
        ISSN={0002-9890},
     journal={Amer. Math. Monthly},
      volume={87},
      number={8},
       pages={640\ndash 644},
         url={https://doi.org/10.2307/2320951},
}

\bib{soberon2024improved}{article}{
      author={Sober{\'o}n, Pablo},
      author={Zerbib, Shira},
       title={Improved tverberg theorems for certain families of polytopes},
        date={2024},
     journal={arXiv preprint arXiv:2404.11533},
}

\bib{Vucic1993}{article}{
      author={Vu\v{c}i\'{c}, Aleksandar},
      author={{\v{Z}}ivaljevi\'c, Rade~T.},
       title={Note on a conjecture of {S}ierksma},
        date={1993},
        ISSN={0179-5376,1432-0444},
     journal={Discrete Comput. Geom.},
      volume={9},
      number={4},
       pages={339\ndash 349},
         url={https://doi.org/10.1007/BF02189327},
}

\bib{Woodall1980}{article}{
      author={Woodall, Douglas~R.},
       title={Dividing a cake fairly},
        date={1980},
        ISSN={0022-247X},
     journal={J. Math. Anal. Appl.},
      volume={78},
      number={1},
       pages={233\ndash 247},
         url={https://doi.org/10.1016/0022-247X(80)90225-5},
}

\bib{Zivaljevic2017}{incollection}{
      author={{\v{Z}}ivaljevi{\'c}, Rade~T.},
       title={Topological methods in discrete geometry},
        date={2017},
   booktitle={{Handbook of Discrete and Computational Geometry}},
     edition={Third},
   publisher={CRC Press},
       pages={551\ndash 580},
}

\end{biblist}
\end{bibdiv}

\end{document}